\documentclass[12pt]{amsart}

\usepackage{amsmath,amssymb,amsfonts,amscd,latexsym}
\usepackage[all]{xy}
\usepackage{graphics,epsfig,colortbl}
\input xy
\xyoption{all}

\usepackage{vmargin} 
\setmarginsrb{20mm}{10mm}{20mm}{16mm}{20mm}{7mm}{20mm}{8mm}

\usepackage[pdftex,bookmarks,colorlinks,breaklinks]{hyperref} 

\newtheorem{theorem}{Theorem}[section]

\newtheorem{proposition}[theorem]{Proposition}
\newtheorem{corollary}[theorem]{Corollary}
\newtheorem{lemma}[theorem]{Lemma}

\newtheorem{question}[theorem]{Question}

\theoremstyle{definition}
\newtheorem{definition}[theorem]{Definition}

\theoremstyle{remark}

\newcommand{\Z}{\mathbb{Z}}

\newcommand{\id}{\operatorname{id}}

\title[Countable compactness and $A$-equivalence]
{Countable Compactness and Free Abelian Topological Groups}
\author[Z. Huang]{Zhouxiang Huang}
 \address[Z. Huang]
{Institute of Mathematics, Nanjing Normal University, Nanjing 210024, China}
\email{2715383574@qq.com}
\date{September 8, 2026}
\subjclass[2020]{Primary 54D30, 22A05; Secondary 54D35, 54H11}
\keywords{Free Abelian topological group, free topological group,
$A$-equivalence, $M$-equivalence, countable compactness,
Stone--\v{C}ech compactification}
\begin{document}
\begin{abstract}
Let $A(X)$ denote the free Abelian topological group over a Tychonoff
space $X$. We prove that if $A(X)$ and $A(Y)$ are topologically
isomorphic and $X$ is countably compact, then $Y$ is also countably compact.
Thus countable compactness is an $A$-invariant and, consequently, an
$M$-invariant; this answers Open Problem~7.10.1 of Arhangel'skii and
Tkachenko. 
\end{abstract}

\maketitle

\section{Introduction}

Throughout the paper all spaces are Tychonoff and all topological groups
are Hausdorff. For a space $X$, let $F(X)$ and $A(X)$ denote the Markov
free topological group and the free Abelian topological group over $X$,
respectively. Spaces $X$ and $Y$ are called $M$-equivalent
(respectively, $A$-equivalent) if $F(X)$ and $F(Y)$ (respectively,
$A(X)$ and $A(Y)$) are topologically isomorphic.

By \cite[Proposition~7.10.1]{AT}, $M$-equivalence implies
$A$-equivalence. Compactness and pseudocompactness are $A$-invariants;
see \cite[Theorems~7.10.10 and 7.10.11]{AT}. The intermediate property
led Arhangel'skii and Tkachenko to the following question
\cite[Open Problem~7.10.1]{AT}.

\begin{question}\label{q:AT}
Suppose that $X$ and $Y$ are $M$-equivalent spaces and that $X$ is
countably compact. Must $Y$ be countably compact?
\end{question}

The question was repeated in \cite[Question~2]{RHO}. A principal partial
result obtained there says that if all finite powers of $X$ are countably
compact and $Y$ is $A$-equivalent to $X$, then all finite powers of $Y$
are countably compact \cite[Theorem~1.15]{RHO}. Since countable
compactness is not preserved by finite products, this result does not
settle Question~\ref{q:AT}.

The purpose of this paper is to prove the following stronger assertion.

\begin{theorem}\label{thm:intro-main}
Let $X$ and $Y$ be Tychonoff spaces. If $A(X)$ and $A(Y)$ are
topologically isomorphic, then
\[
 X\text{ is countably compact}
 \quad\Longleftrightarrow\quad
 Y\text{ is countably compact}.
\]
Consequently, countable compactness is both an $A$-invariant and an
$M$-invariant.
\end{theorem}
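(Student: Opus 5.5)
By symmetry it suffices to show that if $X$ is countably compact and $\varphi\colon A(X)\to A(Y)$ is a topological isomorphism, then $Y$ is countably compact. The $M$-invariance then follows from \cite[Proposition~7.10.1]{AT}.

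The plan is to argue by contradiction. Suppose $Y$ is not countably compact. Then $Y$ contains an infinite closed discrete set $D=\{y_n:n\in\omega\}$. In a Tychonoff space such a set can be refined to one that is $C$-embedded, or at least one carrying a discrete family of open neighbourhoods $U_n\ni y_n$ with continuous functions $f_n\colon Y\to[0,1]$, $f_n(y_n)=1$, $f_n$ vanishing off $U_n$, such that $\sum_n f_n$ is continuous. The standard structural facts I will use are the following.

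First, $X$ countably compact implies $X$ is pseudocompact. Then $A(X)$ is a pseudocompact-like object in a precise sense: every continuous homomorphism $A(X)\to\mathbb{R}$ restricts on $X$ to a bounded function. More usefully, each $A_n(X)$, the set of words of length $\le n$, is countably compact, since it is a continuous image of $(X\oplus -X\oplus\{0\})^n$. That product need not be countably compact, so here I need care.

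The key steps, in order, are as follows.

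1. Use the fact, available from the theory behind \cite[Theorems~7.10.10, 7.10.11]{AT}, that $\varphi(X)\subseteq A_k(Y)$ for some $k$ when $X$ is pseudocompact. This holds because the length function is bounded on pseudocompact subsets of $A(Y)$. Symmetrically, $\varphi^{-1}(Y)$ lies in some $A_m(X)$, because $\varphi^{-1}(D)$ must be bounded in $A(X)$ in the appropriate sense. This bound would come from the continuous homomorphism argument: the homomorphism $\hat g\colon A(Y)\to\mathbb{R}$ extending $g=\sum_n n f_n$ is unbounded on $D$. So $\hat g\circ\varphi$ is a continuous homomorphism $A(X)\to\mathbb{R}$ that is unbounded on $\varphi^{-1}(D)$.

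2. Write each $\varphi^{-1}(y_n)=\sum_{i\le m}\varepsilon_i x_{n,i}$ with $x_{n,i}\in X$. Passing to a subsequence, the number of letters is constant and the signs $\varepsilon_i$ are fixed. By countable compactness of $X$, each sequence $(x_{n,i})_n$ has an accumulation point $p_i\in X$. The obstacle is that I need a \emph{simultaneous} accumulation point $(p_1,\dots,p_m)$ in $X^m$, which countable compactness of $X$ alone does not give.

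3. To bypass step 2, I would work in $\beta X$. Take a free ultrafilter $u$ on $\omega$ and let $p_i=u\text{-}\lim x_{n,i}\in\beta X$. Each $p_i$ lies in $X$, because $X$ is countably compact and the $u$-limit of a sequence lies in its closure of accumulation points. More precisely, I would choose $u$ iteratively so that each coordinate's $u$-limit exists in $X$. This is the crucial point: for a single free ultrafilter, the $u$-limit of a sequence in a countably compact space need not lie in $X$. So instead I would use the continuous homomorphism $\hat g\circ\varphi$ directly. Its restriction $h=\hat g\circ\varphi|_X$ is continuous on $X$, hence bounded, say $|h|\le C$. Then $|\hat g\circ\varphi(\varphi^{-1}(y_n))|=|\sum_i\varepsilon_i h(x_{n,i})|\le mC$. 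This is bounded in $n$, while $\hat g(y_n)=g(y_n)=n$, a contradiction.

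This last argument needs only pseudocompactness of $X$ together with a uniform bound $m$ on word lengths of $\varphi^{-1}(D)$. That cannot be the whole story, since pseudocompactness is already known to be invariant but countable compactness is stronger. So the real obstacle is the length bound in step 1. The length of $\varphi^{-1}(y_n)$ need not be bounded, because $D$ is not pseudocompact. A genuine proof must exploit non-pseudocompact discreteness more cleverly.

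My refined plan is therefore to choose $D$ inside a pseudocompact-bounded region of $Y$. Here is the idea. $Y$ is pseudocompact, by the known invariance, but not countably compact. So there is a point $q\in\beta Y\setminus Y$ in the closure of a countable discrete $D$, and I may take $D\subseteq\varphi(X)$-supports handled via the extension $\bar\varphi\colon A(\beta X)\to A(\beta Y)$, the extension existing since $A(\beta X)$ is the Bohr-type completion-compatible object for pseudocompact $X$. Then:

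(a) $\varphi^{-1}(D)\subseteq A_m(X)$, using the supports of $\bar\varphi^{-1}(\beta Y)$ in $A_m(\beta X)$ from compactness.

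(b) Apply the ultrafilter limit coordinatewise in $\beta X$ to get $\bar\varphi^{-1}(q)=\sum\varepsilon_i p_i$.

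(c) Show that every $p_i$ lies in $X$, using countable compactness of $X$ and a diagonal choice of subsequences. This gives $\bar\varphi^{-1}(q)\in A(X)$, so $q\in A(Y)\cap\beta Y=Y$, a contradiction.

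Step (c) is where I expect the main difficulty: I must pick $q$ along a subsequence where all $m$ coordinate sequences converge to points of $X$, iterating countable compactness one coordinate at a time.
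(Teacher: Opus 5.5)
\textbf{Verdict: there is a genuine gap, at step (c).} Your set-up in (a)--(b) is sound once $\bar\varphi$ is justified properly. For pseudocompact $X$ and $Y$ the maps $A(X)\to A(\beta X)$ and $A(Y)\to A(\beta Y)$ are topological embeddings, and $\varphi(X)$ lies in some compact $A_k(\beta Y)$. So $\varphi|_X$ extends over $\beta X$, and likewise for $\varphi^{-1}$. Also, for every free ultrafilter $u$ the point $q(u)=u\text{-}\lim_n y_n$ lies in $\beta Y\setminus Y$, because $D$ is closed discrete.

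Step (c) is the whole difficulty, and your method cannot carry it out: it is exactly the simultaneous-accumulation problem you identified in step 2. Countably compact spaces need not be sequentially compact. So treating the first coordinate yields only an accumulation point, not a convergent subsequence, and along any ultrafilter compatible with that choice the second coordinate may converge outside $X$. Put $\Gamma_i=\{u\in\omega^*:u\text{-}\lim_n x_{n,i}\in X\}$. Countable compactness of $X$ gives only that each $\Gamma_i$ is dense in $\omega^*$, and dense sets need not meet. Suppose $X^2$ is not countably compact (e.g.\ $X$ the sum of the two Nov\'ak--Terasaka subspaces of $\beta\omega$), and $(x_{n,1},x_{n,2})$ is an injective sequence with no accumulation point in $X^2$. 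Then $\Gamma_1\cap\Gamma_2=\emptyset$. Your argument for (c) uses nothing beyond countable compactness of $X$ and $x_{n,i}\in X$, so it cannot succeed. Incidentally, your first plan fails not for lack of a length bound, which does hold as you use in (a). It fails because $Y$ is pseudocompact, so no $g=\sum_n nf_n$ of the kind you describe exists.

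\textbf{The missing idea} is that a single coordinate in $X$ suffices, provided it carries a nonzero coefficient at $q(u)$.
\begin{enumerate}
\item Expand each letter back in $Y$: $\varphi(x_{n,i})=\sum_j s_{i,j}c_{n,i,j}$ with $c_{n,i,j}\in Y$. Lengths are bounded since $\bar\varphi(\beta X)$ is compact; thin out so that the $s_{i,j}$ are fixed.
\item The maps $u\mapsto q(u),\,x_i(u),\,c_{i,j}(u)$ are continuous on $\omega^*$. Let $a_i(u)$ be the coefficient of $q(u)$ in $\bar\varphi(x_i(u))$ relative to the basis $\beta Y$. Then $a_i(u)=\sum_{j:\,c_{i,j}(u)=q(u)}s_{i,j}$. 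Comparing coefficients in $q(u)=\sum_i\varepsilon_i\bar\varphi(x_i(u))$ gives $\sum_i\varepsilon_i a_i(u)=1$.
\item The equalizers $\{u:c_{i,j}(u)=q(u)\}$ are finitely many closed subsets of $\omega^*$. A finite cover of $\omega^*$ by the resulting locally closed cells has a member with nonempty interior. So on some nonempty open $U\subseteq\omega^*$ the coincidence pattern is constant, every $a_i$ is constant on $U$, and some $a_{i_0}$ is a nonzero constant there.
\item Only now is countable compactness used, and only for the single index $i_0$: pick $u\in U\cap\Gamma_{i_0}$. Then $\bar\varphi(x_{i_0}(u))=\varphi(x_{i_0}(u))\in A(Y)$ has its $\beta Y$-support inside $Y$. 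Yet its coefficient at $q(u)$ is $a_{i_0}\neq0$, so $q(u)\in Y$, a contradiction.
\end{enumerate}
This is the paper's argument. There it is phrased as showing directly that $B=h(Y)$ is countably compact inside the compact basis $C=\tilde h(\beta Y)$ of $A(\beta X)$.
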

\section{Notation and preliminary facts}\label{sec:preliminaries}

If $S$ is a subset of an Abelian group, $\langle{S}\rangle$ denotes the algebraic
subgroup generated by $S$. Every element $g\in A(X)$ has a unique reduced
representation
\[
 g=n_1x_1+\cdots+n_rx_r,
\]
where $x_1,\ldots,x_r$ are distinct points of $X$ and
$n_1,\ldots,n_r$ are non-zero integers. Put
\[
 \ell_X(g)=\sum_{i=1}^r |n_i|
 \quad\text{and}\quad
 A_n(X)=\{g\in A(X):\ell_X(g)\le n\}.
\]
The neutral element has length zero.

We shall use the following standard facts.

\begin{proposition}\label{prop:known}
Let $S$ and $T$ be Tychonoff spaces.
\begin{enumerate}
\item If $F(S)\cong F(T)$, then $A(S)\cong A(T)$.
\item If $A(S)\cong A(T)$, then $S$ is pseudocompact if and only if
      $T$ is pseudocompact.
\item Every pseudocompact subspace $P$ of $A(S)$ is contained in
      $A_n(S)$ for some $n<\omega$.
\item If $S$ is pseudocompact, the canonical homomorphism
      $A(S)\to A(\beta S)$ is a topological monomorphism.
\end{enumerate}
\end{proposition}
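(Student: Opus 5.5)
The four items of Proposition~\ref{prop:known} are standard, so the plan is to supply each one by a short argument, or by citation to \cite{AT}, rather than prove anything new.

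For (1), let $\varphi\colon F(S)\to F(T)$ be a topological isomorphism. It maps the commutator subgroup $[F(S),F(S)]$ onto $[F(T),F(T)]$. By continuity it also maps the closure of the first onto the closure of the second. Hence $\varphi$ induces a topological isomorphism of the quotients $F(S)/\overline{[F(S),F(S)]}$ and $F(T)/\overline{[F(T),F(T)]}$. I will then use the universal property to identify each quotient with the corresponding free Abelian group. Every continuous map $S\to G$ into an Abelian topological group $G$ extends to a continuous homomorphism of $F(S)$. Since $G$ is Hausdorff, this homomorphism kills the closed commutator subgroup. So the quotient has the universal property of $A(S)$, and the induced isomorphism is the required $A(S)\cong A(T)$. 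This is \cite[Proposition~7.10.1]{AT}.

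For (3), I would cite \cite[Corollary~7.4.4 / Theorem~7.4.3]{AT}, or argue directly, which needs more care. Suppose $P$ meets $A_{n+1}(S)\setminus A_n(S)$ for infinitely many $n$. Choose points $g_k\in P$ with $\ell(g_k)$ strictly increasing. Using the Graev extension of pseudometrics, or a continuous function $f\colon S\to\mathbb{R}$, I would construct a continuous homomorphism from $A(S)$ into a suitable Abelian group in which the images of the $g_k$ form a closed discrete set. The construction must make the sequence of lengths unbounded in a way that a continuous real-valued function on $A(S)$ can detect. This contradicts the pseudocompactness of $P$. This is the main obstacle, and I would simply invoke the known theorem.

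For (4), the plan is as follows. The subgroup of $A(\beta S)$ generated by $S$ is algebraically free on $S$, because the points of $S$ remain distinct in $\beta S$. It then suffices to know that $S$ is $A$-embedded in $\beta S$, that is, that $A(S)$ carries the topology induced from $A(\beta S)$. The known criterion is that if every continuous pseudometric on $S$ extends to $\beta S$, then $S$ is $A$-embedded in $\beta S$. For pseudocompact $S$, every continuous pseudometric $d$ extends. The metric quotient of $S$ by $d$ is a pseudocompact metrizable space, hence compact. The quotient map $S\to S/d$ therefore extends to $\beta S$, and pulling back the metric gives the extension of $d$. This is \cite[Theorem~7.10.?]{AT}.

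Finally, for (2), I would first embed $S$ in $\beta S$, using (4) whenever $S$ is pseudocompact, and then apply Tkachenko's theorem \cite[Theorem~7.10.11]{AT}. Alternatively, there is a direct route. $S$ is pseudocompact if and only if $A(S)$ has pseudocompact finite "layers". Since pseudocompact subsets lie in some $A_n$ by (3), I would characterize pseudocompactness of $S$ group-theoretically. The characterization is that $A(S)$ is the union of an increasing sequence of closed subsets, each bounded in the sense of pseudocompactness, that generate the group. Both sides of this condition are preserved by topological isomorphisms. Again I would rely on the citation.
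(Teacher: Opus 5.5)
Your plan works wherever it rests on the literature, and that is all the paper's proof does. It cites [AT] for each item: Proposition 7.10.1 for (1), Theorem 7.10.10 for (2), and Corollaries 7.5.4 and 7.7.5 for (3) and (4). You should align your reference numbers with these.

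Your arguments for (1) and (4) are correct and more informative than the bare citations. In (1), the comparison homomorphisms between $A(S)$ and $F(S)/\overline{[F(S),F(S)]}$ are mutually inverse because the images of $S$ generate. In (4), you correctly isolate the fact that a pseudocompact $S$ is $P$-embedded in $\beta S$: the metric quotient $S/d$ is pseudocompact and metrizable, hence compact, so $S\to S/d$ extends over $\beta S$. The remaining step, that a $P$-embedded subspace generates a topological copy of its free Abelian group, is still a cited theorem.

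The \emph{direct route} for (2) should be removed, because its first claim is false. Pseudocompactness of $S$, even countable compactness, does not make $A_2(S)$ pseudocompact. Take countably compact $X,Y$ with $\omega\subseteq X,Y\subseteq\beta\omega$ and $X\cap Y=\omega$. Put $S=X\oplus Y$, and write $n_X,n_Y$ for the two copies of $n$. Computing in $A_2(\beta S)$, each $n_X+n_Y$ is isolated. The accumulation points of $\{n_X+n_Y:n\in\omega\}$ are the elements $p_X+p_Y$ with $p\in\omega^*$, and none of these lies in $A(S)$. Hence $A_2(S)$ contains an infinite clopen discrete set, so it is not pseudocompact.

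Your second \emph{characterization}, read as stated, holds for $A(\omega)$ with $\omega$ discrete. That group is countable and discrete, and it is the increasing union of the finite sets $A_n(\{0,\ldots,n\})$; yet $\omega$ is not pseudocompact. If you meant that each set generates the group, the converse would need nontrivial facts about bounded subsets of $A(S)$ that you do not supply.

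Your first route adds nothing either, since (4) applies only to the space already known to be pseudocompact. So (2) rests entirely on the cited invariance theorem. That is acceptable, but you should say it plainly.

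Likewise, your sketch for (3) is not yet an argument, for three reasons:
\begin{enumerate}
\item An infinite closed discrete image of $P$ contradicts pseudocompactness only in a metrizable target; Mr\'owka's $\Psi$-spaces are pseudocompact and contain infinite closed discrete subsets.
\item The extension of a single $f\colon S\to\mathbb{R}$ is automatically bounded on $P$.
\item Graev extensions do not detect length, since $\sum_i(x_i-y_i)$ is small whenever the $x_i,y_i$ are close.
\end{enumerate}
Citing, as the paper does, is the right choice here.
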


\begin{proof}
Assertion~(1) is Proposition~7.10.1 of \cite{AT}, and
assertion~(2) is Theorem~7.10.10 therein. Assertions~(3) and~(4) are
Corollaries~7.5.4 and~7.7.5 of the same monograph. The Abelian form of
assertion~(3) also appears as Proposition~1.4 of \cite{RHO}.
\end{proof}

If $K$ is compact, then $A_n(K)$ is compact. Indeed, $A_n(K)$ is the
continuous image, under addition, of the $n$th power of the topological
sum $K\oplus(-K)\oplus\{0\}$.

\begin{definition}\label{def:basis}
A subspace $K$ of a Hausdorff Abelian topological group $G$ is called a
\emph{topological free Abelian basis} of $G$ if the homomorphism
$A(K)\to G$ induced by the inclusion $K\hookrightarrow G$ is a
topological isomorphism.
\end{definition}

In particular, a topological free Abelian basis is an algebraic free
Abelian basis. This algebraic part of the definition will be used when
coefficients are compared.

\section{A positive answer to Question~\ref{q:AT}}
We use $\omega$ as the countable discrete index set and put
$\omega^*=\beta\omega\setminus\omega$. For $D\subseteq\omega$, write
$D^*=\overline D^{\beta\omega}\cap\omega^*$.

We recall the Wallman construction from \cite[Section~3.6]{Eng}.
For a $T_1$-space $X$, let $\mathcal D(X)$ be the lattice of closed
subsets of $X$. A \emph{closed ultrafilter} is a maximal proper filter
in $\mathcal D(X)$, and $wX$ denotes the set of all closed ultrafilters.
Identify $x\in X$ with the principal closed ultrafilter
\[
 p_x=\{F\in\mathcal D(X):x\in F\}.
\]
For open $U\subseteq X$, put
\[
 \mathcal W(U)=\{p\in wX:F\subseteq U\text{ for some }F\in p\}.
\]
These sets form a base for the Wallman topology. The resulting space
$wX$ is quasi-compact and $T_1$, and the above identification embeds
$X$ as a dense subspace \cite[Theorem~3.6.21]{Eng}.
Moreover, $wX$ is Hausdorff if and only if $X$ is normal
\cite[Theorem~3.6.22]{Eng}; in that case it is canonically equivalent
to $\beta X$ \cite[Corollary~3.6.23]{Eng}.
We use this identification for normal spaces below.
For the discrete space $\omega$, closed ultrafilters are ordinary
ultrafilters, and
\[
 \overline D^{\beta\omega}=\{p\in\beta\omega:D\in p\}
 \qquad(D\subseteq\omega).
\]
These sets form a clopen base of $\beta\omega$; consequently, the sets
$D^*$ with $D$ infinite form a clopen base of $\omega^*$.

\begin{lemma}\label{1}
Let $X$ be a normal space and let $A$ be a closed subset of $X$.
Under the identification $wX=\beta X$, a closed ultrafilter $p$
satisfies $p\in\overline A^{\beta X}$ if and only if $A\in p$.
\end{lemma}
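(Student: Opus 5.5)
We will argue directly inside the Wallman space $wX$, which for normal $X$ is identified with $\beta X$ (Corollary~3.6.23 of \cite{Eng}). The Wallman closed sets are the sets $\{p\in wX: F\in p\}$ for $F\in\mathcal D(X)$: the complement of such a set is a union of basic open sets $\mathcal W(U)$, because an ultrafilter not containing $F$ contains some closed set disjoint from $F$ (by maximality). So it suffices to show that
\[
 C_A=\{p\in wX: A\in p\}
\]
coincides with the closure of $A$ in $wX$.

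First we show that $C_A$ is closed and contains $A$. It contains $A$ because $A\in p_x$ for every $x\in A$. For closedness, take $p\notin C_A$. Since $p$ is a maximal closed filter not containing $A$, there is $F\in p$ with $F\cap A=\emptyset$; otherwise $p\cup\{A\}$ would generate a proper closed filter, contradicting maximality. Put $U=X\setminus A$. Then $F\subseteq U$, so $p\in\mathcal W(U)$. Moreover, every $q\in\mathcal W(U)$ contains some $F'\subseteq U$, hence $F'\cap A=\emptyset$, so $A\notin q$. Thus $\mathcal W(U)$ is a neighbourhood of $p$ disjoint from $C_A$, and therefore $\overline A^{\beta X}\subseteq C_A$.

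Conversely, let $A\in p$ and let $\mathcal W(U)$ be a basic neighbourhood of $p$, so that $F\subseteq U$ for some $F\in p$. Since $A,F\in p$ and $p$ is a proper filter, $A\cap F\neq\emptyset$. Any point $x\in A\cap F$ satisfies $F\in p_x$ and $F\subseteq U$, so $p_x\in\mathcal W(U)\cap A$. Hence every basic neighbourhood of $p$ meets $A$, giving $p\in\overline A^{\beta X}$.

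The only delicate point is the maximality argument producing $F\in p$ disjoint from $A$. The family $\{A\cap G: G\in p\}$ together with $p$ generates a closed filter, and this filter is proper exactly when $A$ meets every member of $p$. Normality is not needed in this argument; it is used only to justify the identification $wX=\beta X$. That identification is canonical and fixes $X$ pointwise, so closures computed in $wX$ agree with closures computed in $\beta X$.
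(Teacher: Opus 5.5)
Your proof is correct and is essentially the paper's argument. For $A\in p$ you use that a basic neighbourhood $\mathcal W(U)\ni p$ contains some $F\in p$ with $F\subseteq U$, and that $A\cap F\neq\emptyset$ yields a point $p_x\in\mathcal W(U)\cap A$; for $A\notin p$ you use maximality to obtain $F\in p$ disjoint from $A$, so that $\mathcal W(X\setminus A)$ is a neighbourhood of $p$ missing $A$ (your phrasing via closedness of $C_A=\{p:A\in p\}$ and your explicit check that $p_x\in\mathcal W(U)$ are only cosmetic additions).
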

\begin{proof}
Suppose that $A\in p$. Every basic neighbourhood $\mathcal W(U)$
of $p$ contains a member $F\in p$ in the sense that $F\subseteq U$.
Since $A\cap F\ne\emptyset$, this neighbourhood meets $A$. Thus
$p\in\overline A^{\beta X}$.
Conversely, if $A\notin p$, maximality gives $F\in p$ with
$A\cap F=\emptyset$. Then $\mathcal W(X\setminus A)$ is a
neighbourhood of $p$ disjoint from $A$, so
$p\notin\overline A^{\beta X}$.
\end{proof}

\begin{lemma}\label{2}
Let $X$ be a normal space, let $Y$ be a Tychonoff space, and let
$s:X\to Y$ be continuous. Let $\overline s:\beta X\to\beta Y$
be its continuous extension. For every closed ultrafilter $p$ on $X$,
\[
 \{\overline s(p)\}
 =\bigcap_{A\in p}\overline{s(A)}^{\beta Y}.
\]
\end{lemma}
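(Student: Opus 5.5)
We prove the two inclusions separately, working throughout in $\beta X=wX$ via Lemma~\ref{1}. Write $q=\overline s(p)$ and $Z=\bigcap_{A\in p}\overline{s(A)}^{\beta Y}$.

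\emph{Step 1: $q\in Z$.} Fix $A\in p$. Lemma~\ref{1} gives $p\in\overline A^{\beta X}$. Since $\overline s$ is continuous and extends $s$,
\[
\overline s(p)\in\overline s\bigl(\overline A^{\beta X}\bigr)\subseteq\overline{\overline s(A)}^{\beta Y}=\overline{s(A)}^{\beta Y}.
\]
As $A\in p$ was arbitrary, $q\in Z$.

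\emph{Step 2: $Z\subseteq\{q\}$.} Let $r\in\beta Y$ with $r\ne q$. Since $\beta Y$ is Tychonoff, choose a continuous $f:\beta Y\to[0,1]$ with $f(q)=0$ and $f(r)=1$. Put
\[
U=f^{-1}\bigl([0,1/3)\bigr),\qquad V=f^{-1}\bigl((2/3,1]\bigr).
\]
These are open neighbourhoods of $q$ and $r$ whose closures are disjoint. By continuity of $\overline s$ at $p$, there is a basic Wallman neighbourhood $\mathcal W(G)\ni p$, with $G$ open in $X$, such that $\overline s(\mathcal W(G))\subseteq U$. By the definition of $\mathcal W(G)$, some $F\in p$ satisfies $F\subseteq G$.

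Each point $x\in F$ corresponds to the principal ultrafilter $p_x$, and $G\in p_x$ gives $p_x\in\mathcal W(G)$. Hence $s(F)\subseteq U\cap Y$, so $s(F)\subseteq f^{-1}([0,1/3])$. This set is closed in $\beta Y$, which gives
\[
\overline{s(F)}^{\beta Y}\subseteq f^{-1}\bigl([0,1/3]\bigr).
\]
In particular $r\notin\overline{s(F)}^{\beta Y}$, so $r\notin Z$.

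The only step that needs care is the translation between Wallman neighbourhoods and actual members of $p$ in Step 2. It rests on two facts: the Wallman sets $\mathcal W(G)$ form a base, and the identification $wX=\beta X$ makes the topologies agree. Normality of $X$ is used only to have this identification and Lemma~\ref{1} available. Together, Steps 1 and 2 give $Z=\{q\}$.
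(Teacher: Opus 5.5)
Your proof is correct and is essentially the paper's. Step 1 is identical. In Step 2 you likewise separate $\overline s(p)$ from $r$ by a neighbourhood $U$ whose closure misses $r$, and then find a member of $p$ that $s$ maps into $U$. The only variation is how that member is found: you take it directly from a basic Wallman neighbourhood $\mathcal W(G)\subseteq\overline s^{-1}(U)$, whereas the paper sets $A=s^{-1}(\overline U^{\beta Y}\cap Y)$ and uses density of $X$ together with the converse half of Lemma~\ref{1}.

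One harmless slip: $G$ is open, so it is not a member of $p_x$. What you mean is that the closed set $\{x\}\in p_x$ lies in $G$, whence $p_x\in\mathcal W(G)$.
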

\begin{proof}
For each $A\in p$, Lemma~\ref{1} gives
$p\in\overline A^{\beta X}$, so continuity implies
$\overline s(p)\in\overline{s(A)}^{\beta Y}$.
If $y\in\beta Y\setminus\{\overline s(p)\}$, regularity of
$\beta Y$ yields an open neighbourhood $U$ of $\overline s(p)$
with $y\notin\overline U^{\beta Y}$. Put
\[
 A=s^{-1}(\overline U^{\beta Y}\cap Y).
\]
This set is closed in $X$ and contains
$X\cap\overline s^{-1}(U)$. Since $X$ is dense in $\beta X$ and
$\overline s^{-1}(U)$ is a neighbourhood of $p$, we have
$p\in\overline A^{\beta X}$. Thus $A\in p$ by Lemma~\ref{1},
whereas
$\overline{s(A)}^{\beta Y}\subseteq\overline U^{\beta Y}$
does not contain $y$. This proves the equality.
\end{proof}

\begin{lemma}\label{3}
Let $X$ be a countably compact subspace of a compact Hausdorff space
$K$, and let $s:\omega\to X$ be a map. Denote by
$\overline s:\beta\omega\to K$ the continuous extension of $s$,
viewed as a map into $K$. Then
\[
 \Gamma(s,X)=\{p\in\omega^*:\overline s(p)\in X\}
\]
is dense in $\omega^*$.
\end{lemma}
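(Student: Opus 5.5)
We have to show that every nonempty basic open set $D^*$ of $\omega^*$, with $D\subseteq\omega$ infinite, contains a point $p$ for which $\overline s(p)\in X$. The plan is to use countable compactness of $X$ to find an accumulation point of the sequence indexed by $D$, and then to choose an ultrafilter concentrating on $D$ that converges to that point. We split into two cases, according to whether $s$ takes finitely or infinitely many values on $D$.

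\emph{Case 1: $s(D)$ is finite.} Some fibre $D\cap s^{-1}(x)$ is infinite. Take any free ultrafilter $p$ containing this fibre. Then $p\in D^*$, and $\overline s(p)=x\in X$ by continuity, since $s$ is constant on a member of $p$.

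\emph{Case 2: $s(D)$ is infinite.} Since $X$ is countably compact, the infinite set $s(D)$ has an accumulation point $x\in X$. We may take $x$ to be a complete accumulation point of the countable set, i.e.\ every neighbourhood of $x$ in $X$ contains infinitely many points of $s(D)$. This holds in any $T_1$ countably compact space.

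Next consider the family
\[
\mathcal F=\{D\}\cup\{D\cap s^{-1}(U): U \text{ open in } K,\ x\in U\}\cup\{\omega\setminus F: F\text{ finite}\}.
\]
This family has the finite intersection property. Indeed, a finite intersection of neighbourhoods of $x$ is again a neighbourhood $U$ of $x$, and $U\cap X$ contains infinitely many points of $s(D)$. Hence $D\cap s^{-1}(U)$ is infinite, so removing finitely many points leaves it nonempty.

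Let $p$ be an ultrafilter extending $\mathcal F$. Then $p$ is free and $D\in p$, so $p\in D^*$. To compute $\overline s(p)$, suppose $\overline s(p)=y\ne x$. Since $K$ is Hausdorff, choose disjoint open sets $U\ni x$ and $V\ni y$ in $K$. Continuity of $\overline s$ at $p$ gives a set $E\in p$ with $s(E)\subseteq V$. But $s^{-1}(U)\in p$ as well, so $E\cap s^{-1}(U)\in p$ is nonempty, which contradicts $U\cap V=\emptyset$. Therefore $\overline s(p)=x\in X$, and $p\in\Gamma(s,X)\cap D^*$.

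The only delicate point is the choice of $x$: a mere accumulation point of $s(D)$ in $K$ would not suffice, because it might lie outside $X$. This is exactly where the countable compactness of $X$ is used. The rest is routine ultrafilter convergence.
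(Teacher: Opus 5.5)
Your proof is correct and follows the paper's argument. You choose $x\in X$ (from an infinite fibre, or as an accumulation point of $s(D)$ given by countable compactness) so that $D\cap s^{-1}(V)$ is infinite for every neighbourhood $V$ of $x$, extend these sets together with the cofinite sets to a free ultrafilter $p\ni D$, and show $\overline s(p)=x$. The only differences are cosmetic: you split cases on whether $s(D)$ is finite rather than on whether some fibre is infinite, and you identify $\overline s(p)$ by a direct Hausdorff separation argument instead of invoking Lemma~\ref{2}.
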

\begin{proof}
Fix an infinite set $D\subseteq\omega$. We shall find a point of
$D^*\cap\Gamma(s,X)$.
If some $x\in X$ has $D\cap s^{-1}(x)$ infinite, choose this $x$.
Otherwise $s(D)$ is infinite, and countable compactness gives an
accumulation point $x\in X$ of $s(D)$. In either case,
\[
 D\cap s^{-1}(V)\text{ is infinite}
\]
for every open neighbourhood $V$ of $x$ in $X$.
The family of all these sets, together with all cofinite subsets of
$\omega$, has the finite intersection property. Extend it to an
ultrafilter $p$ on $\omega$. Then $p$ is free and $D\in p$.
For every $A\in p$ and every neighbourhood $V$ of $x$ in $X$, the
intersection $A\cap D\cap s^{-1}(V)$ is nonempty. Hence
$x\in\overline{s(A)}^K$ for every $A\in p$. Applying
Lemma~\ref{2} to the map $s:\omega\to K$ and using $\beta K=K$,
we obtain
\[
 \{\overline s(p)\}=\bigcap_{A\in p}\overline{s(A)}^K=\{x\}.
\]
Thus $p\in D^*\cap\Gamma(s,X)$. Since the sets $D^*$ with $D$
infinite form a base of $\omega^*$, the assertion follows.
\end{proof}
\begin{lemma}\label{4}
Let $X$ be a nonempty Baire space, and let $E_j$ be a closed subset
of $X$ for each $j\in J:=\{1,\ldots,m\}$. For every $I\subseteq J$,
including $I=\varnothing$, put
\[
 P_I=\left(\bigcap_{i\in I}E_i\right)
     \cap\left(\bigcap_{j\in J\setminus I}(X\setminus E_j)\right).
\]
An intersection indexed by the empty set is understood to be $X$.
In particular,
\[
 P_{\varnothing}=\bigcap_{j\in J}(X\setminus E_j)
                 =X\setminus\bigcup_{j\in J}E_j,
 \qquad
 P_J=\bigcap_{j\in J}E_j.
\]
Then there exists a subset $I\subseteq J$ such that
$P_I$ contains a non-empty open subset of $X$.
\end{lemma}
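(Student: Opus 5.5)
The plan is to use the fact that the cells $P_I$, for $I\subseteq J$, partition $X$, and then apply the Baire property. There are two natural routes: an induction on $m$, or a direct choice of a maximal $I$. I would take the direct route.

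\textbf{Partition.} For each $x\in X$ put $I(x)=\{j\in J:x\in E_j\}$. Then $x\in P_{I(x)}$, and $x$ lies in no other cell. Hence $X=\bigcup_{I\subseteq J}P_I$ is a finite union, with $2^m$ pieces.

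\textbf{Baire step.} Each $P_I$ is the intersection of a closed set $\bigcap_{i\in I}E_i$ and an open set $\bigcap_{j\notin I}(X\setminus E_j)$, so each $P_I$ is locally closed. Since $X$ is a nonempty Baire space, it is not a finite union of nowhere dense sets. So some $P_I$ is not nowhere dense, i.e.\ $\operatorname{int}\overline{P_I}\ne\emptyset$.

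\textbf{Upgrade to $P_I$ itself.} Write $P_I=C\cap O$ with $C$ closed and $O$ open. Then $\overline{P_I}\subseteq C$. Let $W=\operatorname{int}\overline{P_I}$, which is nonempty. Since $W\subseteq\overline{P_I}$ and $W$ is a nonempty open set, $W$ meets $P_I\subseteq O$; hence $W\cap O$ is a nonempty open set. Moreover $W\cap O\subseteq C\cap O=P_I$, so $P_I$ contains the nonempty open set $W\cap O$.

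\textbf{Alternative via maximality.} One could instead choose $I$ maximal such that $U_I:=\operatorname{int}\bigcap_{i\in I}E_i\ne\emptyset$; such an $I$ exists because $I=\varnothing$ gives $U_\varnothing=X$. For each $j\notin I$, the set $E_j\cap U_I$ has empty interior by maximality. A finite union of closed sets with empty interior has empty interior, so $U_I\setminus\bigcup_{j\notin I}E_j$ is a nonempty open set inside $P_I$. This route does not even need the Baire hypothesis.

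The only point needing care is the upgrade step, which turns ``$\overline{P_I}$ has interior'' into ``$P_I$ has interior'' using local closedness. The maximality argument avoids this step entirely, so I would present that version and remark that the Baire property is not needed for finitely many closed sets.
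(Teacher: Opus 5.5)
Your proposal is correct. Your first route matches the paper's own argument: partition $X$ into the $2^m$ locally closed cells $P_I$, use the category argument to find a cell that is not nowhere dense, then upgrade via $P_I=C\cap O$. Your upgrade step also supplies a justification the paper leaves implicit, namely that $W\cap O\neq\emptyset$ because a nonempty open subset of $\overline{P_I}$ must meet $P_I\subseteq O$.

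The route you say you would present is genuinely different. Rather than covering $X$ by all cells and selecting a fat one, you choose $I$ maximal with $U_I=\operatorname{int}\bigcap_{i\in I}E_i\neq\emptyset$ and then delete the remaining $E_j$. This uses only the elementary fact that finitely many closed sets with empty interior cannot cover a nonempty open set. Apply that fact in the subspace $U_I$, where the sets $E_j\cap U_I$ are closed and, by maximality, have empty interior. The payoff is an explicit witness $U_I\setminus\bigcup_{j\notin I}E_j\subseteq P_I$, and the argument needs neither local closedness nor the Baire property.

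Your closing remark applies to the paper's route as well. In every topological space a finite union of nowhere dense sets is nowhere dense, so no nonempty space is such a union. Hence Lemma~\ref{4} holds for arbitrary nonempty spaces, and the appeal to $\omega^*$ being Baire in the proof of Theorem~\ref{main} is superfluous.
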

\begin{proof}
If $P_{\varnothing}\ne\varnothing$, then $P_{\varnothing}$ is itself
open in $X$, since $J$ is finite and each $E_j$ is closed. Thus the
conclusion holds with $I=\varnothing$. We may therefore assume that
$P_{\varnothing}=\varnothing$.
Since each $P_I$ is locally closed.
It suffices to prove each locally closed subset with empty interior is no where dense.
Indeed, supposed $P=F\cap V$ has empty interior but $P$ is not no where dense. 
Thus, there exists a non-empty open subset $W\subseteq \overline {P}$. 
Since $\overline {P}\subseteq F$ and $\emptyset\neq W\cap V\subseteq V$.
Thus, $W\cap V\subseteq P$, a contradiction.

Since $X=\bigcup_{I\subseteq J}P_I$ and $X$ is Baire space.
Hence, there exists a  $I\subseteq J$ such that $P_I$ has non-empty interior.
\end{proof}
\begin{definition}
Let $X$ be a Tychonoff space, $(b_n)_{n\in\omega}$ a sequence in $X$,
and $p$ a free ultrafilter on $\omega$. A point $b\in\beta X$ is called
a \emph{$p$-limit} of $(b_n)$ in $\beta X$, written
\[
  b=p\text{-}\lim_{n\in\omega}b_n,
\]
if
\[
  \{n\in\omega:b_n\in U\}\in p
\]
for every neighbourhood $U$ of $b$ in $\beta X$.
\end{definition}
\begin{lemma}\label{5}
Let $X$ be a Tychonoff space, $(b_n)_{n\in\omega}$ a sequence in $X$,
and $p$ a free ultrafilter on $\omega$. Then there exists a unique
point $b(p)\in\beta X$ such that
\[
  b(p)=p\text{-}\lim_{n\in\omega} b_n
\]
in $\beta X$.
\end{lemma}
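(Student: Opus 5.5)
Existence and uniqueness both come from the fact that $\beta X$ is compact Hausdorff. I do not need to build $\beta X$ from ultrafilters. I will treat the sequence as a map $s:\omega\to X\subseteq\beta X$, $s(n)=b_n$. Since $\omega$ is discrete, $s$ is continuous, so it has a continuous extension $\overline s:\beta\omega\to\beta X$. Put $b(p)=\overline s(p)$.

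For existence, let $U$ be an open neighbourhood of $b(p)$ in $\beta X$. By continuity, $\overline s^{-1}(U)$ is an open neighbourhood of $p$ in $\beta\omega$. The sets $\overline D^{\beta\omega}=\{q:D\in q\}$ form a base of $\beta\omega$, so there is $D\in p$ with $\overline D^{\beta\omega}\subseteq\overline s^{-1}(U)$. In particular, every $n\in D$, identified with the principal ultrafilter, satisfies $b_n=\overline s(n)\in U$. Hence $D\subseteq\{n:b_n\in U\}$, and this set belongs to $p$ because $p$ is upward closed. Arbitrary neighbourhoods contain open ones, so $b(p)$ is a $p$-limit. Alternatively, Lemma~\ref{2} applied to $s:\omega\to\beta X$ gives $\{\overline s(p)\}=\bigcap_{D\in p}\overline{s(D)}^{\beta X}$, and this leads to the same conclusion.

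For uniqueness, suppose $b\neq b'$ are both $p$-limits. Since $\beta X$ is Hausdorff, there are disjoint neighbourhoods $U\ni b$ and $U'\ni b'$. Then $\{n:b_n\in U\}$ and $\{n:b_n\in U'\}$ both lie in $p$ and are disjoint, so their intersection is empty and would have to be in $p$. This contradicts properness of $p$.

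There is no real obstacle here. The only point needing care is translating continuity of $\overline s$ at $p$ into membership in $p$, and that rests on the basic clopen sets $\overline D^{\beta\omega}$ recalled above. Freeness of $p$ is not needed for the argument; it only matters for later use.
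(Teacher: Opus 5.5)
Your proof is correct, but it takes a different route from the paper's proof of this lemma.

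\textbf{The paper's route.} The paper argues directly by compactness. The closed sets $F_A=\overline{\{b_n:n\in A\}}^{\,\beta X}$, for $A\in p$, have the finite intersection property, so some point $b$ lies in $\bigcap_{A\in p}F_A$. This $b$ is a $p$-limit: if $\{n:b_n\in U\}\notin p$ for an open $U\ni b$, then maximality of $p$ puts the complement $A$ in $p$, and $b\in F_A\subseteq\beta X\setminus U$, a contradiction. Uniqueness is the same Hausdorff argument as yours.

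\textbf{Your route.} You define $b(p)=\overline s(p)$ via the universal property of $\beta\omega$. You then turn continuity at $p$ into membership in $p$ using the clopen base $\overline D^{\beta\omega}$. This is exactly the argument the paper gives in the next result, Lemma~\ref{lem:ultrafilter-limit-continuity}, where the limit map is identified with $\overline s|_{\omega^*}$.

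\textbf{What each buys.} The paper's proof is self-contained: it uses only compactness and maximality of $p$, and needs neither the extension property of $\beta\omega$ nor the identification of $\beta\omega$ with the ultrafilter space. Yours relies on that identification, recalled from the Wallman construction before Lemma~\ref{1}, but proves more at once. It gives $b(p)=\overline s(p)$, and hence continuity of $p\mapsto b(p)$ on $\omega^*$. In effect it merges Lemma~\ref{5} with the first half of Lemma~\ref{lem:ultrafilter-limit-continuity}. Both arguments work verbatim for any compact Hausdorff space in place of $\beta X$. Your remark that freeness of $p$ plays no role is also correct.
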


\begin{proof}
We give the compactness argument with $C=\beta X$; it applies to a
sequence in any compact Hausdorff space $C$. For each $A\in p$, put
\[
F_A=\overline{\{b_n:n\in A\}}^{\,C}.
\]
Since finite intersections of members of $p$ are nonempty, the family
$\{F_A:A\in p\}$ has the finite intersection property. Compactness
therefore yields a point $b\in\bigcap_{A\in p}F_A$.

For any open neighbourhood $U$ of $b$, let
$S_U=\{n:b_n\in U\}$. If $S_U\notin p$, then
$A=\omega\setminus S_U\in p$, whence
\[
b\in F_A\subseteq C\setminus U,
\]
a contradiction. Thus $S_U\in p$ for every such $U$, proving that
$b=p\text{-}\lim_n b_n$. Uniqueness follows from the Hausdorff
property: two distinct limits would have disjoint neighbourhoods
whose corresponding index sets both belong to $p$, which is impossible.

Different ultrafilters may yield different limits. If $b_n\in B\subseteq C$,
the argument only guarantees $b(p)\in C$; finding a free ultrafilter
$p$ with $b(p)\in B$ requires an additional argument.
\end{proof}
\begin{lemma}\label{lem:ultrafilter-limit-continuity}
Let $C$ be a compact Hausdorff space and let $(y_n)_{n\in\omega}$
be a sequence in $C$. For $p\in\omega^*$, put
\[
 y(p)=p\text{-}\lim_{n\in\omega}y_n.
\]
Then $y:\omega^*\to C$ is continuous. More precisely, if
$s:\omega\to C$ is given by $s(n)=y_n$ and
$\overline s:\beta\omega\to C$ is its continuous extension, then
$y=\overline s|_{\omega^*}$.
Consequently, for any two sequences $(u_n)$ and $(v_n)$ in $C$, the set
\[
 \{p\in\omega^*:p\text{-}\lim_n u_n=p\text{-}\lim_n v_n\}
\]
is closed in $\omega^*$.
\end{lemma}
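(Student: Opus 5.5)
The plan is to reduce everything to the identity $y=\overline s|_{\omega^*}$. Once that identity holds, continuity of $y$ follows immediately, and the closedness of the coincidence set follows from a standard fact about continuous maps into Hausdorff spaces.

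Let $s:\omega\to C$ be $s(n)=y_n$. Since $\omega$ is discrete and $C$ is compact Hausdorff, $s$ has a unique continuous extension $\overline s:\beta\omega\to C$. Fix $p\in\omega^*$ and let $U$ be an open neighbourhood of $\overline s(p)$ in $C$. By continuity, $\overline s^{-1}(U)$ is an open neighbourhood of $p$ in $\beta\omega$. The sets $\overline D^{\beta\omega}=\{q:D\in q\}$ form a base, so there is $D\in p$ with $\overline D^{\beta\omega}\subseteq\overline s^{-1}(U)$. For every $n\in D$ the principal ultrafilter at $n$ contains $D$, hence $s(n)=y_n\in U$. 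Thus $\{n:y_n\in U\}\supseteq D$, and this set lies in $p$. Therefore $\overline s(p)$ is a $p$-limit of $(y_n)$. By the uniqueness part of Lemma~\ref{5}, whose proof works in any compact Hausdorff $C$, we get $y(p)=\overline s(p)$. Hence $y=\overline s|_{\omega^*}$, and $y$ is continuous as the restriction of a continuous map.

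An alternative route is Lemma~\ref{2} applied to $s:\omega\to C$ with $\beta C=C$. It gives $\{\overline s(p)\}=\bigcap_{A\in p}\overline{s(A)}$, and the proof of Lemma~\ref{5} shows that the $p$-limit lies in this same intersection. This also identifies $y(p)$ with $\overline s(p)$.

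For the final claim, apply the first part to $(u_n)$ and $(v_n)$. This gives continuous maps $u,v:\omega^*\to C$, and the set in question is $\{p:u(p)=v(p)\}$. It is the preimage of the diagonal $\Delta_C\subseteq C\times C$ under the continuous map $(u,v):\omega^*\to C\times C$. The diagonal is closed because $C$ is Hausdorff, so the set is closed in $\omega^*$.

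There is no real obstacle here. The only point needing care is the basis argument, namely that every neighbourhood of $p$ contains some $\overline D^{\beta\omega}$ with $D\in p$, and that the points of $D$ themselves map into $U$. Uniqueness of $p$-limits is already provided by Lemma~\ref{5}.
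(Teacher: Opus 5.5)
Your proof is correct and matches the paper's own argument. Both choose a basic set $\overline D^{\beta\omega}\subseteq\overline s^{-1}(U)$ to show $\overline s(p)$ is the $p$-limit, invoke uniqueness from Lemma~\ref{5}, and obtain the coincidence set as the preimage of the closed diagonal $\Delta_C$ under $(u,v)$.
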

\begin{proof}
Since $\omega$ is discrete, $s$ is continuous; the universal property
of $\beta\omega$ gives the continuous extension $\overline s$.
Fix $p\in\omega^*$ and an open neighbourhood $U$ of $\overline s(p)$
in $C$. Choose $D\subseteq\omega$ such that
\[
 p\in\overline D^{\beta\omega}\subseteq\overline s^{-1}(U).
\]
Then $D\in p$ and $D\subseteq\{n:y_n\in U\}$, so
$\{n:y_n\in U\}\in p$. Thus $\overline s(p)$ is the $p$-limit
of $(y_n)$. Uniqueness of the limit, as in Lemma~\ref{5}, gives
$y(p)=\overline s(p)$, proving continuity.
For the final assertion, the two limit maps $u,v:\omega^*\to C$
are continuous, and their equalizer is
$(u,v)^{-1}(\Delta_C)$, where
$\Delta_C=\{(z,z):z\in C\}$ is closed in $C\times C$ because
$C$ is Hausdorff.
\end{proof}

\begin{theorem}\label{main}
Let $G$ be a Hausdorff free abelian topological group, and let $K$ and $C$ be
compact topological free Abelian bases of $G$. Let $X\subseteq K$, and let $B\subseteq C$.
Supposed that $\langle{X}\rangle=\langle{B}\rangle$ as algebraic subgroups of $G$. 
If $X$ is countably compact, then $B$ is countably compact.
\end{theorem}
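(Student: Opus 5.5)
The plan is to test countable compactness of $B$ on sequences. Take an infinite set $\{b_n:n\in\omega\}\subseteq B$ with the $b_n$ distinct, and look for a free ultrafilter $p$ such that $b(p)=p\text{-}\lim_n b_n$, computed in the compact set $C$, lies in $B$. Such a point is an accumulation point of $\{b_n\}$ in $B$.

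\emph{Step 1: reduction to $X$.} First I would check that $C\cap\langle B\rangle=B$. If $c\in C$ and $c=\sum m_jb_j$ with $b_j\in B\subseteq C$, then uniqueness of coefficients with respect to the algebraic basis $C$ forces $c=b_j$ for some $j$. Hence it suffices to find $p\in\omega^*$ with $b(p)\in\langle X\rangle=\langle B\rangle$.

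\emph{Step 2: expansion in $K$.} Since $C$ is compact in $G\cong A(K)$, Proposition~\ref{prop:known}(3) gives $C\subseteq A_N(K)$ for some $N$. Because $K$ is an algebraic basis and $b_n\in\langle X\rangle$, each $b_n$ can be written as $b_n=\sum_{i\le N}\varepsilon_{n,i}x_{n,i}$ with $x_{n,i}\in X$ and $\varepsilon_{n,i}\in\{-1,0,1\}$. There are finitely many coefficient patterns, so I pass to an infinite $D\subseteq\omega$ on which the pattern is a fixed $(\varepsilon_1,\dots,\varepsilon_N)$, and work inside the clopen set $D^*$.

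\emph{Step 3: limits and continuity.} For $p\in D^*$ put $x_i(p)=p\text{-}\lim_n x_{n,i}$ in $K$. By Lemma~\ref{lem:ultrafilter-limit-continuity} each map $x_i$ is continuous. Addition on $G$ is continuous, so $b(p)=\sum_i\varepsilon_ix_i(p)$.

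\emph{Step 4: the main obstacle.} By Steps~1 and~3 it is enough to find $p\in D^*$ such that, after cancellation, every point $x_i(p)$ carrying a nonzero net coefficient belongs to $X$. Lemma~\ref{3} gives this for each index $i$ separately: $\Gamma(s_i,X)$ is dense, where $s_i(n)=x_{n,i}$. However, a finite intersection of dense sets need not be nonempty. This is precisely the failure of countable compactness of $X^N$, so I expect this step to be the heart of the proof.

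My first attempt at Step~4 is as follows. Apply Lemma~\ref{4} in the compact, hence Baire, space $D^*$. Take as the $E_j$ the closed sets $\{p: x_i(p)=x_k(p)\}$ (closed by Lemma~\ref{lem:ultrafilter-limit-continuity}). This yields a nonempty open set $W$, which I may take to be some $D_1^*$, on which the coincidence pattern among the $x_i(p)$ is constant. On $W$ the net coefficient of each coincidence class is then fixed, so the support of $b(p)$ is indexed by a fixed set $S$ of classes.

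Next I would pick representatives $i_1,\dots,i_k$ of the classes in $S$. I would build $p$ by an inductive version of the proof of Lemma~\ref{3}, as follows.
\begin{enumerate}
\item Choose $y_1\in X$ such that $D_1\cap s_{i_1}^{-1}(V)$ is infinite for every neighbourhood $V$ of $y_1$.
\item Refine to sets on which $s_{i_2}$ still has an accumulation point $y_2\in X$, using countable compactness of $X$ along the refined index sets.
\item Continue in the same way through $i_k$.
\end{enumerate}
Each stage shrinks within a fixed nonempty open set of $\omega^*$, so the constancy of the pattern on $W$ persists.

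The delicate point is to keep all the chosen families of infinite sets jointly with the finite intersection property. I would handle this by always passing to a single infinite subset $D_{t+1}\subseteq D_t$ along which $s_{i_t}$ converges towards $y_t$ in a countable-base sense, using the compact metrizable-free $p$-limit only at the end.

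If this direct diagonalisation fails, my fallback is different. I would use the fact that the map $p\mapsto b(p)$ takes values in $C$, together with the constancy of the support pattern on $W$. The aim would be to show that the set of $p\in W$ with $x_{i_t}(p)\in X$ for all $t$ is comeagre in $W$, and to conclude by the Baire property of $W$.
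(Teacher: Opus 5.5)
Your Steps~1--3 are sound, but Step~4 is a genuine gap, and neither of your two strategies closes it. The diagonalisation uses only two facts about the sequences $s_{i_t}$: they take values in $X$, and their limits have a constant coincidence pattern on $W$. An argument of that generality, applied to the coordinate sequences of a sequence in $X^k$, would prove that every finite power of a countably compact space is countably compact. That is false.

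For example, let $X_1,X_2\subseteq\beta\omega$ be countably compact with $X_1\cap X_2=\omega$. Put $X=X_1\oplus X_2\subseteq\beta\omega\oplus\beta\omega$, and let $s_1,s_2$ be the identity of $\omega$ viewed in the first and second summand. Their limits never coincide, yet $\Gamma(s_1,X)=X_1\setminus\omega$ and $\Gamma(s_2,X)=X_2\setminus\omega$ are disjoint.

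Concretely, your stage~2 asks for $y_2$ making $(y_1,y_2)$ a cluster point in $X^2$ of $(s_{i_1}(n),s_{i_2}(n))_{n\in D_1}$. Countable compactness of $X$ does not supply such a point. Your device of passing to $D_2\subseteq D_1$ along which $s_{i_1}\to y_1$ needs non-trivial convergent sequences, and countably compact spaces need not have any (e.g.\ those lying between $\omega$ and $\beta\omega$). The comeagre fallback fails for the same reason: two disjoint dense subsets of $\omega^*$ cannot both be comeagre. You also give no mechanism by which ``$b(p)\in C$'' would supply the missing structure.

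The missing idea is to use the second basis $C$ so that a single index suffices; you use $C$ only through $C\cap\langle B\rangle=B$.
\begin{enumerate}
\item Since $K$ is compact in $G\cong A(C)$, $K\subseteq A_M(C)$. After a further refinement, $x_{n,i}=\sum_j s_{i,j}c_{n,i,j}$ with fixed coefficients and $c_{n,i,j}\in C$.
\item For $p\in\omega^*$, let $\lambda_{b(p)}:G\to\Z$ be the coordinate homomorphism at $b(p)$ relative to $C$. Put $a_i(p)=\lambda_{b(p)}(x_i(p))$, which is the sum of those $s_{i,j}$ with $c_{i,j}(p)=b(p)$.
\item Applying $\lambda_{b(p)}$ to $b(p)=\sum_i\varepsilon_ix_i(p)$ gives $1=\sum_i\varepsilon_ia_i(p)$.
\item Apply Lemma~\ref{4} not to coincidences among the $x_i(p)$ in $K$, but to the closed equalizers $\{p:c_{i,j}(p)=b(p)\}$. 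On the resulting open set $U$ every $a_i$ is constant, so some $a_{i_0}$ is non-zero throughout $U$.
\item A single application of Lemma~\ref{3} gives $p\in U$ with $x_{i_0}(p)\in X\subseteq\langle B\rangle$. If $b(p)\notin B$, then $\lambda_{b(p)}$ vanishes on $\langle B\rangle$, contradicting $\lambda_{b(p)}(x_{i_0}(p))=a_{i_0}(p)\neq0$.
\end{enumerate}
So instead of proving $b(p)\in\langle X\rangle$ directly, which needs all $K$-letters of $b(p)$ in $X$ at once, one shows that $b(p)$ is a $C$-letter of one element of $\langle B\rangle$. The $p$ obtained this way does satisfy your Step~4 condition, but only a posteriori.
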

\begin{proof}
Let $D$ be a countably infinite subset of $B$, and choose a one-to-one enumeration $D=\{b_n:n\in\omega\}$. 
We shall find an accumulation point of $D$ in $B$.

Since $C$ is a compact subspace of $A(K)$, Proposition~\ref{prop:known}(3) gives $C\subseteq A_N(K)$ for some $N<\omega$. 
After passing to a subsequence and renumbering, we may write
\[
 b_n=\sum_{i=1}^{r}q_i x_{n,i},
\tag{2.2}\label{eq:b-K}
\]
where $r$ and the non-zero integers $q_1,\ldots,q_r$ do not depend on $n$, and $x_{n,1},\ldots,x_{n,r}$ are distinct points of $K$ for each $n$. 
Since $b_n\in\langle{B}\rangle=\langle{X}\rangle$. 
The uniqueness of the reduced representation relative to the basis $K$ gives
\[
 x_{n,i}\in X
 \qquad(n\in\omega,\ 1\le i\le r).
\tag{2.3}\label{eq:x-X}
\]

\smallskip
The compact set $K\subseteq A(C)$ is contained in $A_M(C)$ for some
$M<\omega$. Since $r$ is finite, a further common subsequence permits us
to fix, for every $i\le r$, an integer $m_i$ and non-zero integers
$s_{i,1},\ldots,s_{i,m_i}$ such that
\[
 x_{n,i}=\sum_{j=1}^{m_i}s_{i,j}c_{n,i,j},
\tag{2.4}\label{eq:x-C}
\]
where the points $c_{n,i,1},\ldots,c_{n,i,m_i}$ are distinct for fixed
$n$ and $i$. Since $x_{n,i}\in X\subseteq\langle{B}\rangle$, uniqueness of the
reduced representation relative to $C$ gives
\[
 c_{n,i,j}\in B.
\tag{2.5}\label{eq:c-B}
\]

\smallskip
Apply Lemma~\ref{lem:ultrafilter-limit-continuity} to $(b_n)$ and
each $(c_{n,i,j})$ in $C$, and to each $(x_{n,i})$ in $K$.
For $p\in\omega^*$, denote the resulting limits by
\[
 b(p)=p\text{-}\lim_n b_n,\qquad
 x_i(p)=p\text{-}\lim_n x_{n,i},\qquad
 c_{i,j}(p)=p\text{-}\lim_n c_{n,i,j}.
\tag{2.6}\label{eq:limits}
\]
Thus the maps $b,c_{i,j}:\omega^*\to C$ and
$x_i:\omega^*\to K$ are continuous and are the restrictions of the
corresponding Stone--\v{C}ech extensions. Continuity of addition
implies
\[
 b(p)=\sum_{i=1}^{r}q_i x_i(p)
\tag{2.7}\label{eq:limit-one}
\]
and
\[
 x_i(p)=\sum_{j=1}^{m_i}s_{i,j}c_{i,j}(p).
\tag{2.8}\label{eq:limit-two}
\]
At the limit, different letters may coalesce and opposite coefficients
may cancel. The next calculation records both effects.

\smallskip
For each fixed $p\in\omega^*$, let
\[
 \lambda_{b(p)}:G\longrightarrow\Z
\]
be the algebraic coordinate homomorphism which takes the value $1$ at
the basis element $b(p)$ and $0$ at every element of
$C\setminus\{b(p)\}$. 
Set $L=\{j:\,c_{i,j}(p)=b(p)\}$. Put
\[
 a_i(p)=\lambda_{b(p)}(x_i(p))
 =\sum_{j\in L}s_{i,j}.
\tag{2.9}\label{eq:ai}
\]
Applying $\lambda_{b(p)}$ to \eqref{eq:limit-one}, we obtain
\[
 1=\sum_{i=1}^{r}q_i a_i(p)
 \qquad(p\in\omega^*).
\tag{2.10}\label{eq:diagonal}
\]
This calculation is purely algebraic and is made separately for each
$p$. No continuity of $\lambda_{b(p)}$ or of the functions $a_i$ is
asserted.

\smallskip
For the finitely many pairs $(i,j)$ define
\[
 E_{i,j}=\{p\in\omega^*:c_{i,j}(p)=b(p)\}.
\tag{2.11}\label{eq:equalizers}
\]
By Lemma~\ref{lem:ultrafilter-limit-continuity},
$E_{i,j}$ is closed in $\omega^*$.
The space $\omega^*$ is compact Hausdorff, hence Baire.
Put $J=\{(i,j):1\le i\le r,\ 1\le j\le m_i\}$.
Applying Lemma~\ref{4} to the finite family
$\{E_{i,j}:(i,j)\in J\}$, we obtain a subset $I\subseteq J$ and a
non-empty open set $U\subseteq\omega^*$ such that
\[
 U\subseteq
 \left(\bigcap_{(i,j)\in I}E_{i,j}\right)
 \cap
 \left(\bigcap_{(i,j)\in J\setminus I}
             (\omega^*\setminus E_{i,j})\right).
\]
Thus, for every $p\in U$ and every $(i,j)\in J$,
\[
 c_{i,j}(p)=b(p)\quad\Longleftrightarrow\quad(i,j)\in I.
\]
For each $i\in\{1,\ldots,r\}$, formula~\eqref{eq:ai} therefore gives
\[
 a_i(p)=\sum_{\substack{1\le j\le m_i\\(i,j)\in I}}s_{i,j}
 \qquad(p\in U),
\]
where an empty sum is understood to be zero. Both the index set and
the coefficients in this sum are independent of $p$, so each $a_i$
is constant on $U$. By \eqref{eq:diagonal}, these constant values
satisfy $\sum_{i=1}^r q_i a_i(p)=1$ on $U$, and hence at least one
of them is non-zero. Consequently, there is a fixed $i_0\le r$ such that
\[
 a_{i_0}(p)\ne0
 \qquad(p\in U).
\tag{2.12}\label{eq:nonzero}
\]

\smallskip
By \eqref{eq:x-X} and Lemma~\ref{3}, the set
\[
 \Gamma_{i_0}=\{p\in\omega^*:x_{i_0}(p)\in X\}
\]
is dense in $\omega^*$. Choose $p\in U\cap\Gamma_{i_0}$. Then
\[
 x_{i_0}(p)\in X\subseteq\langle{X}\rangle=\langle{B}\rangle.
\tag{4.13}\label{eq:inside-B}
\]
We claim that $b(p)\in B$. Suppose, towards a contradiction, that
$b(p)\notin B$. Since $B\subseteq C$, every $d\in B$ then belongs to
$C\setminus\{b(p)\}$, and hence $\lambda_{b(p)}(d)=0$ by the
definition of the coordinate homomorphism. Being a group homomorphism,
$\lambda_{b(p)}$ therefore vanishes on the entire subgroup
$\langle B\rangle$ generated by $B$. In view of \eqref{eq:inside-B},
this implies $\lambda_{b(p)}(x_{i_0}(p))=0$. On the other hand,
\eqref{eq:ai} and \eqref{eq:nonzero} give
\[
 \lambda_{b(p)}(x_{i_0}(p))=a_{i_0}(p)\ne0,
\]
a contradiction. Thus $b(p)\in B$.

Since $p$ is free and $b(p)=p\text{-}\lim b_n$, every neighbourhood of
$b(p)$ in $B$ contains infinitely many terms of the injective sequence
$(b_n)$. Thus $b(p)$ is an accumulation point of $D$ in $B$. We have
proved that every countably infinite subset of $B$ has an accumulation
point in $B$. Since $B$ is Tychonoff, this is equivalent to countable
compactness.
\end{proof}

\begin{lemma}\label{lem:beta-lift}
Let $X$ and $Y$ be pseudocompact Tychonoff spaces, and let
\[
 h:A(Y)\longrightarrow A(X)
\]
be a topological isomorphism. Put $K=\beta X$ and $L=\beta Y$. Under the
canonical identifications of $A(X)$ and $A(Y)$ with subgroups of $A(K)$
and $A(L)$, respectively, there exists a topological isomorphism
\[
 \tilde{h}:A(L)\longrightarrow A(K)
\]
such that $\tilde{h}|_{A(Y)}=h$.
\end{lemma}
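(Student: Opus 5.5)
We will use the classical fact behind Proposition~\ref{prop:known}(4): for pseudocompact $X$, $A(\beta X)$ is (topologically) the completion-type extension in which $A(X)$ sits densely, and the embedding $A(X)\hookrightarrow A(\beta X)$ is a topological monomorphism. The plan is to extend $h$ and $h^{-1}$ by continuity and check that the extensions are mutually inverse.

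\emph{Step 1: extend $h$ on the generators.} Consider the composite $Y\to A(Y)\xrightarrow{h}A(X)\hookrightarrow A(K)$. Since $Y$ is pseudocompact, $h(Y)$ is a pseudocompact subspace of $A(X)$. By Proposition~\ref{prop:known}(3) it lies in $A_n(X)\subseteq A_n(K)$ for some $n$, and $A_n(K)$ is compact (remark after Proposition~\ref{prop:known}). Hence the continuous map $h|_Y:Y\to A_n(K)$ extends to a continuous map $\varphi:L=\beta Y\to A_n(K)\subseteq A(K)$. By the universal property of $A(L)$, $\varphi$ induces a continuous homomorphism $\tilde h:A(L)\to A(K)$, and $\tilde h|_{A(Y)}=h$ because the two agree on $Y$, which generates $A(Y)$. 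Symmetrically, $h^{-1}$ yields a continuous homomorphism $\tilde g:A(K)\to A(L)$ extending $h^{-1}$.

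\emph{Step 2: $\tilde g\circ\tilde h=\id_{A(L)}$ and $\tilde h\circ\tilde g=\id_{A(K)}$.} It suffices to check $\tilde g\circ\varphi=\id_L$ on $L$. On $Y$ we have $\tilde g(\varphi(y))=h^{-1}(h(y))=y$. Both sides are continuous maps $L\to A(L)$, and $A(L)$ is Hausdorff, so they agree on $\overline{Y}=L$. The continuity of $\tilde g\circ\varphi$ is clear since $\tilde g$ is continuous. Consequently $\tilde g\circ\tilde h$ is a homomorphism fixing $L$ pointwise, hence is the identity. The other composition is handled symmetrically. Therefore $\tilde h$ is a continuous bijective homomorphism whose inverse $\tilde g$ is continuous, that is, a topological isomorphism.

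\emph{The main obstacle} is the compatibility of identifications. We must make sure that the topology on $A(X)$ used to define $h$ agrees with the subspace topology from $A(K)$; this is exactly Proposition~\ref{prop:known}(4), and it is needed so that $h|_Y$ is continuous as a map into $A(K)$. We also need the density of $Y$ in $L$ to argue within $A(L)$ rather than within $A(Y)$. We also need the boundedness $h(Y)\subseteq A_n(K)$ to get a compact target for the Stone--\v{C}ech extension; this is where pseudocompactness of $Y$ and Proposition~\ref{prop:known}(3) are essential.
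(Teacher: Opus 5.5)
Your proposal is correct and follows essentially the same route as the paper: extend $h|_Y$ over $L=\beta Y$ into the compact set $A_n(K)$, pass to the induced homomorphism $\tilde h$, do the same for $h^{-1}$, and use density of $Y$ in $L$ together with Hausdorffness of $A(L)$ to show that the two extensions are mutually inverse. One small correction to your closing remark: continuity of $h|_Y$ into $A(K)$ already follows from continuity of the canonical homomorphism $A(X)\to A(K)$, which the universal property provides. So Proposition~\ref{prop:known}(4) is needed only to make the identifications in the statement meaningful, not for any step of the argument.
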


\begin{proof}
By Proposition~\ref{prop:known}(4), the canonical homomorphisms
$A(X)\to A(K)$ and $A(Y)\to A(L)$ are topological monomorphisms. We make
these identifications from now on.

The subspace $h(Y)$ of $A(K)$ is pseudocompact. By
Proposition~\ref{prop:known}(3), there exists $N<\omega$ such that
\[
 h(Y)\subseteq A_N(K).
\]
Put $f=h|_Y:Y\to A_N(K)$. Since $A_N(K)$ is compact, $f$ has a
continuous extension
\[
 \tilde{f}:L\longrightarrow A_N(K)\subseteq A(K).
\]
The universal property of $A(L)$ gives a continuous homomorphism
\[
 \tilde{h}:A(L)\longrightarrow A(K)
\]
such that $\tilde{h}|_L=\tilde{f}$. In particular,
$\tilde{h}|_Y=f=h|_Y$. Since $Y$ algebraically generates $A(Y)$,
we have $\tilde{h}|_{A(Y)}=h$.

Let $g=h^{-1}:A(X)\to A(Y)$. Applying the same construction to $g$,
we obtain a continuous homomorphism
\[
 \tilde{g}:A(K)\longrightarrow A(L)
\]
such that $\tilde{g}|_{A(X)}=g$. Consequently, for every $y\in Y$,
\[
 (\tilde{g}\circ\tilde{h})(y)
 =\tilde{g}(h(y))=g(h(y))=y.
\tag{5.1}\label{eq:inverse-on-Y}
\]
The continuous maps $(\tilde{g}\circ\tilde{h})|_L:L\to A(L)$ and the
canonical inclusion of $L$ into $A(L)$ agree on the dense subspace
$Y$ of $L$. They therefore agree on all of $L$. Since $L$ algebraically
generates $A(L)$,
\[
 \tilde{g}\circ\tilde{h}=\id_{A(L)}.
\]
The symmetric argument gives
$\tilde{h}\circ\tilde{g}=\id_{A(K)}$. Thus $\tilde{h}$ is a
topological isomorphism with inverse $\tilde{g}$, and
$\tilde{h}|_{A(Y)}=h$ as required.
\end{proof}

\begin{theorem}\label{thm:A-invariant}
Let $X$ and $Y$ be Tychonoff spaces. If $A(X)$ and $A(Y)$ are
topologically isomorphic, then $X$ is countably compact if and only if
$Y$ is countably compact.
\end{theorem}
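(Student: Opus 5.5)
By symmetry it suffices to assume that $X$ is countably compact and to show that $Y$ is countably compact. The plan is to reduce the statement to Theorem~\ref{main}. We place both spaces inside a single group $G=A(\beta X)$, which has two compact topological free Abelian bases, and we check the hypothesis $\langle X\rangle=\langle B\rangle$ for a suitable copy $B$ of $Y$.

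\textbf{Step 1: pseudocompactness.} A countably compact Tychonoff space is pseudocompact, so $X$ is pseudocompact. Since $A(X)\cong A(Y)$, Proposition~\ref{prop:known}(2) shows that $Y$ is pseudocompact as well. Fix a topological isomorphism $h:A(Y)\to A(X)$, and put $K=\beta X$ and $L=\beta Y$. Lemma~\ref{lem:beta-lift} then yields a topological isomorphism $\tilde h:A(L)\to A(K)$ extending $h$, where $A(X)\subseteq A(K)$ and $A(Y)\subseteq A(L)$ via the canonical topological monomorphisms of Proposition~\ref{prop:known}(4).

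\textbf{Step 2: the two bases.} Put $G=A(K)$ and $C=\tilde h(L)$. The set $K$ is a compact topological free Abelian basis of $G$, trivially. The set $C$ is compact as a continuous image of $L$. Moreover, the homomorphism $A(C)\to G$ induced by inclusion equals $\tilde h\circ A(\tilde h|_L^{-1})$. Here $\tilde h|_L:L\to C$ is a homeomorphism, because it is a continuous bijection from a compact space onto a Hausdorff space. Hence this homomorphism is a topological isomorphism, and $C$ is a compact topological free Abelian basis of $G$.

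\textbf{Step 3: applying Theorem~\ref{main}.} Put $B=\tilde h(Y)=h(Y)\subseteq C$. We have $X\subseteq K$, and
\[
 \langle B\rangle=h(\langle Y\rangle)=h(A(Y))=A(X)=\langle X\rangle,
\]
as subgroups of $G$. Theorem~\ref{main} therefore shows that $B$ is countably compact. Finally, $h|_Y:Y\to B$ is a homeomorphism, since $h$ is a topological isomorphism onto its image. Consequently $Y$ is countably compact.

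The main subtlety, as I see it, is a bookkeeping one. The identification of $A(X)$ with a subgroup of $A(\beta X)$ must be a topological embedding, and this is exactly Proposition~\ref{prop:known}(4); it is needed so that $B$ carries the topology of $Y$ both as a subspace of $C$ and as a subspace of $G$. We must also verify that $C$ is genuinely a topological free Abelian basis in the sense of Definition~\ref{def:basis}, which is handled in Step 2. All the substantial work is already contained in Theorem~\ref{main} and Lemma~\ref{lem:beta-lift}. The $M$-invariance consequence then follows immediately from Proposition~\ref{prop:known}(1).
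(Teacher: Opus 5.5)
Your proposal is correct and follows the paper's proof essentially step for step: you obtain pseudocompactness of $Y$ from Proposition~\ref{prop:known}(2), take the extension $\tilde h:A(\beta Y)\to A(\beta X)$ from Lemma~\ref{lem:beta-lift}, choose $C=\tilde h(L)$ and $B=h(Y)$, check $\langle B\rangle=\langle X\rangle$, and apply Theorem~\ref{main}. Your Step~2 identifies the inclusion-induced map $A(C)\to G$ with $\tilde h\circ A\bigl((\tilde h|_L)^{-1}\bigr)$, which supplies the check that $C$ is a topological free Abelian basis; the paper asserts this without comment.
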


\begin{proof}
By symmetry, it suffices to prove one implication. Let
$h:A(Y)\to A(X)$ be a topological isomorphism and suppose that $X$ is
countably compact. Then $X$ is pseudocompact, and pseudocompactness is an
$A$-invariant by Proposition~\ref{prop:known}(2). Hence $Y$ is
pseudocompact.

Put $K=\beta X$ and $L=\beta Y$. By Lemma~\ref{lem:beta-lift}, $h$
extends to a topological isomorphism
\[
 \tilde{h}:A(L)\longrightarrow A(K),
 \qquad \tilde{h}|_{A(Y)}=h.
\]
Define
\[
 C=\tilde{h}(L),\qquad B=\tilde{h}(Y)=h(Y).
\]
The sets $K$ and $C$ are compact topological free Abelian bases of
$A(K)$, and $\tilde{h}|_Y=h|_Y:Y\to B$ is a homeomorphism. Moreover,
\[
 \langle{B}\rangle=\tilde{h}(A(Y))=h(A(Y))=A(X)=\langle{X}\rangle.
\tag{5.2}\label{eq:main-generated}
\]
Theorem~\ref{main} shows that $B$ is countably compact.
Therefore $Y$ is countably compact.
\end{proof}
\begin{corollary}\label{cor:M-invariant}
Let $X$ and $Y$ be Tychonoff spaces. If $F(X)$ and $F(Y)$ are
topologically isomorphic, then $X$ is countably compact if and only if
$Y$ is countably compact. In particular, the answer to
Question~\ref{q:AT} is affirmative.
\end{corollary}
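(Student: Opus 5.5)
The plan is to combine Proposition~\ref{prop:known}(1) with Theorem~\ref{thm:A-invariant}. Suppose $F(X)$ and $F(Y)$ are topologically isomorphic. By Proposition~\ref{prop:known}(1), $M$-equivalence implies $A$-equivalence, so $A(X)$ and $A(Y)$ are topologically isomorphic. Theorem~\ref{thm:A-invariant} then gives that $X$ is countably compact if and only if $Y$ is.

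For the ``in particular'' clause, I would take $X$ and $Y$ as in Question~\ref{q:AT}: they are $M$-equivalent and $X$ is countably compact. The equivalence just proved shows that $Y$ is countably compact, so the answer to the question is affirmative.

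No step here should pose a real obstacle. All the substantive work has already been done in Theorem~\ref{main}, Lemma~\ref{lem:beta-lift} and Theorem~\ref{thm:A-invariant}. The only point to check is that these results are stated for arbitrary Tychonoff spaces, which they are, so no extra hypotheses such as normality are needed.
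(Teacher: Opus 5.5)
Your proposal is correct and is exactly the paper's argument. Proposition~\ref{prop:known}(1) converts $F(X)\cong F(Y)$ into $A(X)\cong A(Y)$, and Theorem~\ref{thm:A-invariant} then gives the equivalence of countable compactness, which answers Question~\ref{q:AT} affirmatively.
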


\begin{proof}
By Proposition~\ref{prop:known}(1), $M$-equivalence implies
$A$-equivalence. Apply Theorem~\ref{thm:A-invariant}.
\end{proof}

\end{document}